\numberwithin{equation}{section}
\newtheorem{theorem}[equation]{Theorem}
\newtheorem{lemma}[equation]{Lemma}
\newtheorem{corollary}[equation]{Corollary}
\theoremstyle{definition}
\theoremstyle{remark}
\newtheorem{remark}[equation]{Remark}
\DeclarePairedDelimiter\abs{\lvert}{\rvert}
\newcommand*{\meas}[1]{\mu(#1)}
\DeclarePairedDelimiter\norm{\lVert}{\rVert}
\DeclarePairedDelimiter\Set\{\}
\newcommand*{\N}{\mathbb{N}}
\newcommand*{\Z}{\mathbb{Z}}
\newcommand*{\R}{\mathbb{R}}
\renewcommand*{\C}{\mathbb{C}}
\newcommand*{\dif}{\mathrm{d}}
\newcommand*{\dist}{\mathrm{dist}}
\newcommand*{\diam}{\mathrm{diam}}
\newcommand*{\supp}{\mathrm{supp}}
\newcommand*{\ch}{\mathrm{ch}}
\newcommand{\Dini}{\mathrm{Dini}}
\newcommand{\logDini}{\mathrm{log-Dini}}
\newcommand{\calD}{\mathcal{D}}
\newcommand{\calP}{\mathcal{P}}
\newcommand{\calQ}{\mathcal{Q}}
\newcommand{\sparse}{\mathcal{S}}
\newcommand*{\mean}[2][Q]{(#2)_{#1}}
\newcommand{\scale}{k}
\newcommand{\one}{\mathbf{1}}
\begin{document}
\allowdisplaybreaks
\title{Intrinsic square functions with arbitrary aperture}
\author{Pavel Zorin-Kranich}
\date{}
\maketitle
\begin{abstract}
We consider intrinsic square functions defined using (log-)Dini continuous test functions on spaces of homogeneous type.
We prove weighted estimates with optimal (at least in the Euclidean case) dependence on the aperture of the cone used to define the square function and linear dependence on the (log-)Dini modulus of continuity.
\end{abstract}

\section{Introduction}
Let $(X,\rho,\mu)$ be a space of homogeneous type, $\kappa>1$ a scale parameter, and $\omega$ a \emph{modulus of continuity}, that is, a function $\omega :[0,\infty) \to [0,\infty)$ with $\omega(0)=0$ that is subadditive in the sense that $u\leq t+s \implies \omega(u)\leq \omega(t)+\omega(s)$.
The \emph{Dini} and \emph{log-Dini} norms of a modulus of continuity are defined by
\begin{equation}
\label{eq:Dini}
\norm{\omega}_{\Dini}
:=
\int_{0}^{1} \omega(t) \frac{\dif t}{t},
\quad
\norm{\omega}_{\logDini}
:=
\int_{0}^{1} \omega(t) \frac{\abs{\log t} \dif t}{t}.
\end{equation}
Fix a parameter $\Phi>0$.
For a locally integrable function $f\in L^{1}_{\mathrm{loc}}(X)$ define a function on $X\times\Z$ by
\[
A_{\omega}f(x,k) := \sup_{\phi\in C_{\omega}(x,k)} \abs[\big]{\int f \phi},
\]
where the class of test functions is given by
\begin{multline*}
C_{\omega}(x,k) := \Set[\Big]{ \phi : X\to\R, \quad \supp\phi\subset B(x,\kappa^{k}),\ \int\phi=0,\ \sup_{y} \abs{\phi(y)} \leq \meas{B(x,\kappa^{k})}^{-1} \Phi,\\
\forall y,y'\ \abs{\phi(y)-\phi(y')}\leq \meas{B(x,\kappa^{k})}^{-1} \omega(\rho(y,y')/\kappa^{k}) }.
\end{multline*}
The \emph{intrinsic square function with aperture $\beta$} is defined by
\begin{equation}
\label{eq:G}
G_{\omega,\beta}f(x) := \Big( \sum_{k\in\Z} \fint_{B(x,\beta\kappa^{k})} (A_{\omega}f(y,k))^{2} \dif y \Big)^{1/2},
\end{equation}
where $\fint_{Q}f = (f)_{Q} = \meas{Q}^{-1} \int_{Q} f$ denotes an average.
\begin{theorem}
\label{thm:L1-FS}
Let $(X,\rho,\mu)$ be a space of homogeneous type, $\omega$ be a modulus of continuity, and $\beta\geq 1$.
Then for every non-negative locally integrable function (weight) $v\in L^{1}_{\mathrm{loc}}(X)$ we have
\begin{equation}
\label{eq:L2-FS}
\int_{X} (G_{\omega,\beta}f)^{2} v
\lesssim
\Phi \norm{\omega}_{\Dini} \int_{X} \abs{f}^{2} Mv
\end{equation}
and
\begin{equation}
\label{eq:isq-FS}
\sup_{\lambda>0} \lambda v \Set{ G_{\omega,\beta}f > \lambda }
\lesssim
\gamma^{1/2} \norm{\omega}_{\logDini} \int_{X} \abs{f} Mv,
\end{equation}
where $M$ denotes the uncentered Hardy--Littlewood maximal function and
\begin{equation}
\label{eq:doubling}
\gamma = \sup_{x\in X, t>0} \frac{\meas{B(x,\beta t)}}{\meas{B(x,t)}}.
\end{equation}
The implicit constants in \eqref{eq:L2-FS} and \eqref{eq:isq-FS} and later may depend on the quasi-triangle and the doubling constant of $X$ and the parameter $\kappa$, but not on $v,\omega,\beta,\Phi,f$.

If the reverse doubling property $B(x,CR) \geq (1+\epsilon) B(x,R)$ holds for some $C>1$, $\epsilon>0$, and all $x\in X$ and $R>0$, then the log-Dini norm in \eqref{eq:isq-FS} can be replaced by the Dini norm.
\end{theorem}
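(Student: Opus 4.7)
I would establish the weighted $L^{2}$ Fefferman--Stein estimate \eqref{eq:L2-FS} first and address \eqref{eq:isq-FS} separately, since, as noted below, \eqref{eq:isq-FS} does not follow from \eqref{eq:L2-FS} by interpolation. A Fubini swap combined with doubling yields
\[
\int_{X} (G_{\omega,\beta}f)^{2} v
\lesssim
\sum_{k\in\Z} \int_{X} (A_{\omega}f(y,k))^{2} Mv(y) \dif y,
\]
reducing \eqref{eq:L2-FS} to an aperture-free Carleson-type bound with weight $Mv$; this is where doubling absorbs the average over $B(x,\beta\kappa^{k})$ into $Mv$.

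\textbf{$L^{2}$ estimate.} For each $(y,k)$ and $\phi\in C_{\omega}(y,k)$ I would decompose $f$ into Christ-type dyadic martingale differences $D_Q f$ and split
\[
\int f\phi
=
\sum_{Q:\,\ell(Q)\leq\kappa^{k}} \int D_{Q} f\cdot\phi
+
\sum_{Q:\,\ell(Q)>\kappa^{k}} \int D_{Q} f\cdot\phi.
\]
The first sum uses $\int D_{Q}f=0$ to replace $\phi$ by $\phi-c_{Q}$ on $Q$, after which the modulus-of-continuity clause in $C_{\omega}(y,k)$ yields $|\phi-c_{Q}|\lesssim \meas{B(y,\kappa^{k})}^{-1}\omega(\ell(Q)/\kappa^{k})$; the second uses $\int\phi=0$ together with the near-constancy of $D_{Q}f$ on $B(y,\kappa^{k})$ to extract an analogous $\omega$-weighted tail. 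A \emph{weighted} Cauchy--Schwarz --- with one copy of $\omega(\ell(Q)/\kappa^{k})$ absorbed into the weighting and the other collapsing to $\|\omega\|_{\Dini}$ --- followed by the Fefferman--Stein inequality for the martingale square function $\bigl(\sum_Q |D_Q f|^{2}\bigr)^{1/2}$ against $Mv$, produces \eqref{eq:L2-FS} with linear dependence in both $\Phi$ and $\|\omega\|_{\Dini}$.

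\textbf{Weak type and reverse doubling.} The bound \eqref{eq:isq-FS} is sharper than what $L^{2}$ Chebyshev would give (the latter would cost $\Phi\|\omega\|_{\Dini}$), and must be proved from cancellation of $\phi$ alone, without using the parameter $\Phi$. I would perform a Calderón--Zygmund decomposition $f=g+\sum_{Q\in\calQ}b_{Q}$ at level $\lambda$ and estimate $A_{\omega}b_{Q}(y,k)$ pointwise by $\meas{B(y,\kappa^{k})}^{-1}\omega(\ell(Q)/\kappa^{k})\|b_{Q}\|_{L^{1}}$, using only mean-zero and the modulus-of-continuity of $\phi$. Squaring, summing in $k$, and integrating against $v$ outside $\bigcup_{Q}\beta Q$ --- whose $v$-measure is controlled via \eqref{eq:doubling} by $\gamma\lambda^{-1}\|f\|_{L^{1}(Mv)}$ --- reduces matters to a $k$-series in $\omega(\ell(Q)/\kappa^{k})$ weighted by $\meas{\beta Q}/\meas{B(y,\kappa^{k})}$. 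The $k$-range over which this ratio stays $\sim 1$ contributes the extra factor $|\log t|$, producing $\|\omega\|_{\logDini}$, while the square-root passage from $\ell^{2}$ to $\ell^{1}$ in $k$ introduces precisely $\gamma^{1/2}$ rather than $\gamma$. Under reverse doubling the ratio decays geometrically in $k$, so the series collapses to $\|\omega\|_{\Dini}$ and the log is absent.

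\textbf{Main obstacle.} The two delicate points are: (i) obtaining \emph{linear} rather than quadratic dependence on $\|\omega\|_{\Dini}$, since a naive Cauchy--Schwarz on the two $\omega$-factors produced by the martingale decomposition yields $\|\omega\|_{\Dini}^{2}$; the weighted Cauchy--Schwarz sketched above is essential here. And (ii) tracking the sharp aperture exponent $\gamma^{1/2}$ in \eqref{eq:isq-FS}: one cannot simply interpolate from \eqref{eq:L2-FS} because that would reintroduce $\Phi$, so the bad-part analysis must exploit cancellation of $\phi$ throughout and balance the $v$-measure of the $\beta$-enlargement against a square-root estimate on its complement.
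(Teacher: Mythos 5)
Your overall plan --- Fubini plus a Carleson embedding for \eqref{eq:L2-FS}, and a Calderón--Zygmund decomposition for \eqref{eq:isq-FS} --- is the same as the paper's, and your identification of the two delicate points (linear $\Dini$-dependence, the exponent $\gamma^{1/2}$) is accurate. But there are two genuine gaps, and in both cases the paper contains a specific device that your proposal omits.

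In the $L^2$ step, your Fubini swap correctly gives $\int(G_{\omega,\beta}f)^2 v\lesssim\int\sum_k(A_\omega f(\cdot,k))^2\,Mv$, but the remaining weighted estimate $\int\sum_k(A_\omega f)^2\,Mv\lesssim\Phi\|\omega\|_{\Dini}\int|f|^2\,Mv$ does \emph{not} follow by applying the Fefferman--Stein square-function inequality with the weight $Mv$: that yields $\int|f|^2 M(Mv)$ on the right, and $M(Mv)\lesssim Mv$ is equivalent to $Mv\in A_1$, which need not hold with a controlled constant. The paper avoids passing to $Mv$ at this stage. It partitions $X\times\Z$ into the level sets $F_l$ of the ratio $v(B(y,\beta\kappa^k))/\mu(B(y,\beta\kappa^k))$, and on $F_l$ it uses $\beta\geq 1$ together with the compact support of the test functions to show $A_\omega f(y,k)=A_\omega(f\mathbf{1}_{E_l})(y,k)$ with $E_l=\{Mv>2^l\}$; the \emph{unweighted} $L^2$ bound (Lemma~\ref{lem:L2}) applied to $f\mathbf{1}_{E_l}$ then suffices, and $\sum_l 2^l\mathbf{1}_{E_l}\lesssim Mv$ closes the argument. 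Without a substitute for this restriction trick, your $L^2$ estimate does not close.

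In the weak type step, your exceptional set $\bigcup_{Q}\beta Q$ has $v$-measure $\lesssim\gamma\lambda^{-1}\int|f|\,Mv$, which overshoots the target by $\gamma^{1/2}$. The paper never dilates the Whitney cubes after the fact: it decomposes on $\Omega=\{Mf>\gamma^{-1/2}\lambda\}$ (note the rescaled level) using Whitney cubes with distance parameter proportional to $\beta$, so that $C\beta Q\subset\Omega$ automatically, and $v(\Omega)\lesssim\gamma^{1/2}\lambda^{-1}\int|f|\,Mv$ by Fefferman--Stein is exactly the budget; the rescaled level is also what makes the good-function contribution balance at $\gamma^{1/2}$. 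Your bad-part pointwise estimate and your reading of the $\logDini$ factor (the inner sum over intermediate scales is $\leq \scale(Q')-\scale(Q)+1$, and becomes $O(1)$ under reverse doubling) do agree with the paper. Finally, your martingale-difference route for the inner Carleson embedding is genuinely different from the paper's Lemma~\ref{lem:L2}, which linearizes $A_\omega$, passes to the adjoint, and establishes almost orthogonality $|\langle a_P,a_Q\rangle|\lesssim\Phi(\mu(P)/\mu(Q))^{1/2}\omega(\kappa^{\scale(P)-\scale(Q)})$, with the asymmetric split of $\mu(P)/\mu(Q)$ across the two halves of Cauchy--Schwarz producing a single factor $\|\omega\|_{\Dini}$; a martingale decomposition can also work, but the analogous asymmetric weighting would need to be exhibited explicitly, as you yourself flag.
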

Here and later $A\lesssim B$ means $A\leq CB$ and $C$ denotes an absolute constant that may change from line to line.

In order to compare Theorem~\ref{thm:L1-FS} with earlier results notice that $\gamma\sim\beta^{D}$ if $(X,\rho,\mu)$ is Ahlfors--David $D$-regular, e.g.\ if $X=\R^{D}$.
The estimate \eqref{eq:L2-FS} for classical square functions goes back to \cite{MR800004} and the estimate \eqref{eq:isq-FS} to \cite{MR891775}.
The intrinsic function with a power modulus of continuity $\omega(t)=t^{\alpha}$, $0<\alpha\leq 1$, has been introduced by Wilson who proved the above estimates in the case $\beta=1$ \cite{MR2414491}.
The main novelty of Theorem~\ref{thm:L1-FS} is that we consider intrinsic square functions with varying aperture $\beta$, which in the weighted case is more efficient than reducing to the case $\beta=1$.
In the unweighted case $v\equiv 1$ Theorem~\ref{thm:L1-FS} can be deduced from the special case $\beta=1$ by \cite[Lemma 2.1]{MR3232586} (for the weak type $(1,1)$ estimate) and \cite{MR2783323} (for $1<p\leq 2$), at least on $X=\R^{D}$.
The dependence of weighted estimates on the aperture $\beta$ has been previously investigated for several more classical square functions in \cite{MR3232586,MR3521084}.

Without loss of generality we may assume $\Phi \lesssim \omega(1) \lesssim \norm{\omega}_{\Dini}$, so that $\Phi$ can be replaced by either of these quantities in \eqref{eq:L2-FS}.
An estimate involving $\Phi$ is useful in applications to singular integral operators.

The quasimetric case of Theorem~\ref{thm:L1-FS} follows from the metric case and the metrization theorem for quasimetric spaces \cite{MR2538591}, hence we assume from now on that $(X,\rho)$ is a metric space and $\mu$ a doubling measure: $\mu(B(x,2r)) \lesssim \mu(B(x,r))$ for all $x\in X$ and $r>0$.
However, only notational changes are required in the quasimetric case.
The quasimetric case of Theorem~\ref{thm:L1-FS} might be interesting in connection with the spline-like systems of H\"older continuous test functions that have been constructed in \cite{MR3008566}.

Since the intrinsic square function involves nontangentional regions, it is not difficult to deduce a sparse estimate from the weak type $(1,1)$ estimate.
Applying known estimates for sparse operators (see \cite{arXiv:1509.00273} for all cases with the exception of the two-weight weak type $(2,2)$ estimate which is proved in Appendix~\ref{sec:2weight-log} following the one-weight argument in \cite{MR3455749}) we obtain the following consequences.
\begin{corollary}
\label{cor:sq:weight}
Let $\omega$ be a modulus of continuity and $\beta\geq 1$.
Then for every $1<p<\infty$ and weights $w,\sigma$ we have
\begin{equation}
\label{eq:cor:strong}
\frac{\norm{G_{\omega,\beta}(\cdot\sigma)}_{L^{p}(\sigma) \to L^{p}(w)}}{\gamma^{1/2} \norm{\omega}_{\logDini}}
\lesssim
[w,\sigma]_{A_{p}}^{\frac1p} \cdot
\begin{cases}
[\sigma]_{A_{\infty}}^{\frac1p}, & 1<p\leq 2\\
[w]_{A_{\infty}}^{\frac12-\frac1p} + [\sigma]_{A_{\infty}}^{\frac1p}, & 2<p<\infty
\end{cases}
\end{equation}
and
\begin{equation}
\label{eq:cor:weak}
\frac{\norm{G_{\omega,\beta}(\cdot\sigma)}_{L^{p}(\sigma) \to L^{p,\infty}(w)}}{\gamma^{1/2} \norm{\omega}_{\logDini}}
\lesssim
[w,\sigma]_{A_{p}}^{\frac1p} \cdot
\begin{cases}
1, & 1<p< 2\\
(1+\log [w]_{A_{\infty}})^{1/2}, & p=2\\
[w]_{A_{\infty}}^{\frac12-\frac1p}, & 2<p<\infty.
\end{cases}
\end{equation}
\end{corollary}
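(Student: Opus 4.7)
The plan is to deduce from the weak-type $(1,1)$ estimate \eqref{eq:isq-FS} a pointwise sparse square-function domination
\[
G_{\omega,\beta}f(x) \lesssim \gamma^{1/2}\norm{\omega}_{\logDini}\, \Bigl(\sum_{Q\in\sparse}\mean[Q]{\abs{f}}^{2}\one_{Q}(x)\Bigr)^{1/2}
\]
for a $\tfrac{1}{2}$-sparse family $\sparse$ of Christ-type dyadic cubes on $X$ depending on $f$, and then to invoke the quantitative two-weight inequalities for sparse square functions collected in \cite{arXiv:1509.00273}. After substituting $f\sigma$ for $f$ and rewriting the resulting averages in terms of $[w,\sigma]_{A_{p}}$ and the $A_{\infty}$ characteristics of $w$ and $\sigma$ in the standard manner, this yields \eqref{eq:cor:strong} and the cases $p\neq 2$ of \eqref{eq:cor:weak} directly. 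The two-weight weak $(2,2)$ bound with the logarithmic factor $(1+\log[w]_{A_{\infty}})^{1/2}$ does not appear in that reference; it will be established in Appendix~\ref{sec:2weight-log} by transferring the one-weight sparse argument of \cite{MR3455749} to the two-weight setting.

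The sparse bound itself I would obtain by a standard Lerner--Nazarov--Hyt\"onen principal cube construction. Fix a Christ dyadic system on $(X,\rho,\mu)$. For each principal cube $Q_{0}$ select the maximal subcubes $Q_{j}\subset Q_{0}$ on which either $\mean[\cdot]{\abs{f}}$ or a suitably localized version of $G_{\omega,\beta}$ exceeds a large fixed multiple of the corresponding $Q_{0}$-average. Applying \eqref{eq:isq-FS} to $f\one_{cQ_{0}}$ with $v = \one_{cQ_{0}}$ for an enlargement $c=c(\beta)$ bounds the $L^{1,\infty}$-norm of this local $G$, and hence forces the selected cubes to cover at most $\tfrac{1}{2}\meas{Q_{0}}$. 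The nontangential structure of $G_{\omega,\beta}$, namely the averaging over $B(y,\beta\kappa^{k})$ in \eqref{eq:G}, ensures that scales $k$ with $\kappa^{k}$ much larger than $\diam Q_{0}$ contribute only a bounded multiple of $\mean[Q_{0}]{\abs{f}}^{2}$ to the $Q_{0}$-average of $G_{\omega,\beta}^{2}(f\one_{X\setminus cQ_{0}})$, so the decomposition of scales introduces no further loss. Standard iteration over generations of principal cubes then produces the sparse family.

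The main obstacle will be the bookkeeping of the aperture $\beta$. The averaging radius $\beta\kappa^{k}$ in \eqref{eq:G} forces the enlargement $c$ above to be comparable to $\beta$, and the measure of $cQ_{0}$ grows by a factor $\gamma$. One must verify that this growth is absorbed exactly into the factor $\gamma^{1/2}$ coming from \eqref{eq:isq-FS}, and is not accidentally doubled when one handles the square in $G^{2}$ via Cauchy--Schwarz; in other words, the sparse constant for this $\ell^{2}$-valued object must scale like $\gamma^{1/2}$ rather than $\gamma$. With this accounting in place, the quoted sparse-operator estimates together with the appendix yield the full corollary.
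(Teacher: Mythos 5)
Your high-level strategy --- derive a sparse square-function domination from the weak type $(1,1)$ estimate, then invoke the two-weight sparse estimates from \cite{arXiv:1509.00273} together with the weak $(2,2)$ estimate of Appendix~\ref{sec:2weight-log} --- is the same as the paper's. The paper's sparse form carries the oscillation $\inf_{c}\fint_{CQ}\abs{f-c}$ in place of your $(\abs{f})_{Q}$; as the paper remarks, this refinement does not improve the weighted bounds, so your version is adequate on that count.

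However, the step you flag as ``the main obstacle'' is a genuine gap, and the naive localization does not close it. If you apply \eqref{eq:isq-FS} to $f\one_{cQ_{0}}$ with $v=\one_{cQ_{0}}$ and $c\sim\beta$, the bound you obtain is
\[
\mu\Set[\big]{G_{\omega,\beta}(f\one_{cQ_{0}})>\lambda}\lesssim\frac{\gamma^{1/2}\norm{\omega}_{\logDini}}{\lambda}\,\mu(cQ_{0})\fint_{cQ_{0}}\abs{f},
\]
so the exceptional set is controlled against $\mu(cQ_{0})\sim\gamma\,\mu(Q_{0})$ rather than $\mu(Q_{0})$. To force the stopping children to cover at most half of $Q_{0}$ you must then take $\lambda\gtrsim\gamma^{3/2}\norm{\omega}_{\logDini}\fint_{cQ_{0}}\abs{f}$, which plants a constant $\gamma^{3/2}$, not $\gamma^{1/2}$, into the sparse domination. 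The averages in the resulting sparse form would also live on $\beta$-dilated cubes, and transferring them onto genuine dyadic cubes via adjacent grids would leak further factors of $\gamma$ through the mismatch $\mu(cQ)/\mu(Q)\sim\gamma$.

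The paper's device that avoids this loss is to set $\beta=\kappa^{\Delta k}$ and to work with the truncated, reindexed local square function
\[
F(Q',Q)=\sup_{x'\in Q'}\sum_{k=\scale(Q')}^{\scale(Q)}\fint_{B(x',C\kappa^{k})}\bigl(A_{\omega}f(y,k-\Delta k)\bigr)^{2}\dif y
\]
on pairs of nested dyadic cubes. The shift $k\mapsto k-\Delta k$ in the argument of $A_{\omega}$ absorbs the aperture into the scale: for $\scale(Q')\leq k\leq\scale(Q)$ the test functions in $A_{\omega}(\cdot,k-\Delta k)$ have support of radius $\kappa^{k-\Delta k}\leq\kappa^{k}$ while the averaging ball has radius $C\kappa^{k}$ with $C$ an \emph{absolute} constant. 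Consequently $F(Q',Q)$ depends on $f$ only inside the fixed enlargement $CQ$ (independent of $\beta$), is invariant under $f\mapsto f+\mathrm{const}$, and satisfies $G_{\omega,\beta}f^{2}\lesssim\sup_{x\in Q'\subset Q}F(Q',Q)\lesssim G_{\omega,C\beta}f^{2}$ pointwise. The stopping rule \eqref{eq:sparse-dom:stop} is tested against $\inf_{c}\fint_{CP}\abs{f-c}$, and the sparsity bound \eqref{eq:sparse-dom:sparse} then follows from Theorem~\ref{thm:L1-FS} at a cost of $\mu(CP)/\mu(P)=O(1)$ from plain doubling, not $\gamma$. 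This reindexing is the essential step that actually delivers the aperture constant $\gamma^{1/2}$; it is what your sketch is missing.
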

The log-Dini norm can be replaced by the Dini norm under the same condition as in Theorem~\ref{thm:L1-FS}.
The $A_{p}$ and $A_{\infty}$ characteristics in Corollary~\ref{cor:sq:weight} have to be computed on adjacent dyadic grids.
The definition of the two weight $A_{p}$ characteristic is recalled in \eqref{eq:2weight-Ap}, while for the Fujii--Wilson $A_{\infty}$ characteristic we refer to \cite{arXiv:1509.00273}.
In the one weight case $w^{1/p}\sigma^{1/p'}\equiv 1$, estimating $[w]_{A_{\infty}} \lesssim [w]_{A_{p}}$ and $[\sigma]_{A_{\infty}} \lesssim [\sigma]_{A_{p'}} = [w]_{A_{p}}^{p'/p}$, the estimate \eqref{eq:cor:strong} recovers the bound
\begin{equation}
\label{eq:cor:Ap}
\frac{\norm{G_{\omega,\beta}}_{L^{p}(w) \to L^{p}(w)}}{\gamma^{1/2} \norm{\omega}_{\logDini}}
\lesssim
[w]_{A_{p}}^{\max(1/2,1/(p-1))}.
\end{equation}
The dependence on $\beta$ in \eqref{eq:cor:strong} and \eqref{eq:cor:weak} is optimal for power weights on $\R^{D}$; we include a short proof of this fact in Section~\ref{sec:beta-power}.

\section{Dyadic cubes}
A \emph{system of dyadic cubes} $\calD$ on $(X,\rho)$ with constants $\kappa>1$, $a_{0}>0$, $C_{1}<\infty$ consists of collections $\calD_{k}$, $k\in\Z$, of open subsets of $X$ such that
\begin{enumerate}
\item $\forall k\in\Z \quad \mu(X\setminus\cup_{Q\in\calD_{k}} Q)=0$,
\item If $l\geq k$, $Q\in\calD_{l}$, $Q'\in\calD_{k}$, then either $Q'\subseteq Q$ or $Q'\cap Q=\emptyset$,
\item For every $l\geq k$ and $Q'\in\calD_{k}$ there exists a unique $Q\in\calD_{l}$ such that $Q\supseteq Q'$,\
\item $\forall k\in\Z, Q\in\calD_{k} \quad \exists c_{Q}\in X : B(c_{Q},a_{0}\kappa^{k}) \subseteq Q \subseteq B(c_{Q}, C_{1} \kappa^{k})$.
\end{enumerate}
Abusing the notation we write $\scale(Q)=k$ if $Q\in\calD_{k}$, so that each cube remembers its scale although the same cube (as a set) may appear in other $\calD_{l}$.
We use $\calD$ to denote the disjoint union of $\calD_{k}$.
We also write $CQ=B(c_{Q},CC_{1}\kappa^{\scale(Q)})$, where $C\geq 1$.

A system of dyadic cubes in a general space of homogeneous type has been constructed by Christ \cite{MR1096400}, and we fix such a system $\calD$.
Since we do not require the small boundary property from the dyadic cubes, an easier construction by Hyt\"onen and Kairema \cite{MR2901199} also suffices for our purposes.
A finite collection of systems of dyadic cubes is called \emph{adjacent} if there exists $C<\infty$ such that for every ball $B(x,r)$ there exists a dyadic cube $Q$ in one of these collections such that $B(x,r) \subset Q \subset B(x,Cr)$.
Adjacent systems of dyadic cubes on geometrically doubling spaces, and in particular on spaces of homogeneous type, have been constructed in \cite{MR2901199}.

A collection $\sparse\subset\calD$ is called
\begin{enumerate}
\item \emph{$\eta$-sparse} if there exist pairwise disjoint subsets $E(Q)\subset Q\in\sparse$ with $\meas{E(Q)} \geq \eta \meas{Q}$ and
\item \emph{$\Lambda$-Carleson} if one has $\sum_{Q'\subset Q, Q'\in\sparse} \mu(Q') \leq \Lambda \mu(Q)$ for all $Q\in\calD$.
\end{enumerate}
It is known that a collection is $\eta$-sparse if and only if it is $1/\eta$-Carleson \cite[\textsection 6.1]{arXiv:1508.05639}.

\section{$L^{2}$ estimates}
\label{sec:square:L2}
\begin{lemma}
\label{lem:L2}
The operator
\[
A_{\omega} : L^{2}(X) \to L^{2}(X \times \Z)
\]
is bounded with norm $\lesssim(\Phi \norm{\omega}_{\Dini})^{1/2}$.
\end{lemma}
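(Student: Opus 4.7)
My plan is to use a $TT^{*}$ argument combined with Schur's test, with cancellation in the kernel coming from the zero-mean and modulus-of-continuity conditions in $C_\omega(x,k)$.

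First I would linearize. For each $(x,k) \in X \times \Z$, select (measurably, via standard selection arguments on the convex set $C_\omega(x,k)$, or by approximating $A_\omega$ by suprema over countable dense subfamilies) a near-optimal $\phi_{x,k} \in C_{\omega}(x,k)$ and define the linear operator $Tf(x,k) := \int f\, \phi_{x,k} \dif\mu$. Since $|Tf| \leq A_{\omega}f$ with arbitrarily small loss, it suffices to bound $\norm{T}_{L^{2}(X) \to L^{2}(X \times \Z)}$ uniformly over such selections; equivalently, to bound $\norm{TT^{*}}_{L^{2}(X \times \Z) \to L^{2}(X \times \Z)}$. The kernel of $TT^{*}$ is $K((x,k),(x',k')) = \langle \phi_{x,k}, \phi_{x',k'} \rangle_{L^{2}(X)}$.

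The key step is a kernel decay estimate. By symmetry assume $k' \leq k$. Using $\int \phi_{x',k'} = 0$,
\[
K((x,k),(x',k')) = \int \bigl(\phi_{x,k}(y) - \phi_{x,k}(x')\bigr)\, \phi_{x',k'}(y) \dif\mu(y).
\]
For $y \in \supp \phi_{x',k'} \subseteq B(x',\kappa^{k'})$ we have $\rho(y,x') \leq \kappa^{k'}$, so the modulus-of-continuity bound on $\phi_{x,k}$ gives
$\abs{\phi_{x,k}(y) - \phi_{x,k}(x')} \leq \meas{B(x,\kappa^{k})}^{-1} \omega(\kappa^{-(k-k')})$.
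Combined with $\norm{\phi_{x',k'}}_{L^{1}} \leq \Phi$ (from the $L^{\infty}$ bound and support condition), this yields
\[
\abs{K((x,k),(x',k'))} \lesssim \Phi\, \meas{B(x,\kappa^{k})}^{-1}\, \omega(\kappa^{-(k-k')}),
\]
with $K = 0$ unless $B(x,\kappa^{k}) \cap B(x',\kappa^{k'}) \neq \emptyset$, i.e.\ unless $\rho(x,x') \leq 2\kappa^{k}$.

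Finally I apply Schur's test. For fixed $(x,k)$ and $k' \leq k$, integrating over $x' \in B(x,2\kappa^{k})$ and using doubling,
$\int_{X} \abs{K((x,k),(x',k'))} \dif\mu(x') \lesssim \Phi\, \omega(\kappa^{-(k-k')})$,
and summing over $k' \leq k$ gives $\Phi \sum_{j \geq 0} \omega(\kappa^{-j}) \lesssim \Phi \norm{\omega}_{\Dini}$ by comparison of the Riemann sum with the Dini integral (with constant depending on $\kappa$). The case $k' > k$ is handled symmetrically using the zero mean of $\phi_{x,k}$, the modulus of continuity of $\phi_{x',k'}$, and the doubling estimate $\int \mathbf{1}_{B(x,C\kappa^{k'})}(x') \meas{B(x',\kappa^{k'})}^{-1} \dif\mu(x') \lesssim 1$. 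Hence $\norm{TT^{*}}_{op} \lesssim \Phi \norm{\omega}_{\Dini}$, giving $\norm{T}^{2} \lesssim \Phi \norm{\omega}_{\Dini}$ uniformly in the linearization, which is the claim. The main substantive step is the kernel bound: both cancellation mechanisms (zero mean and modulus of continuity) must be used, applied to whichever of the two test functions lives at the finer scale, and the measurable selection is a technical point handled by standard means.
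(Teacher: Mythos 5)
Your argument is correct, and it uses the same two cancellation mechanisms as the paper (the zero mean of the coarser-scale test function together with the modulus of continuity of the finer-scale one, plus the $L^{1}$-normalization $\norm{\phi_{x',k'}}_{L^{1}}\leq\Phi$ and doubling). The packaging, however, is genuinely different: the paper passes to the adjoint $\tilde A_{\omega}^{*}g = \sum_{Q\in\calD}\lambda_{Q}a_{Q}$, discretizing the $(y,k)$ parameters into dyadic cubes, forms $L^{2}$-normalized ``molecules'' $a_{Q}$ supported in $CQ$, and proves an almost-orthogonality estimate $\abs{\langle a_{P},a_{Q}\rangle}\lesssim\Phi\,\meas{P}^{1/2}\meas{Q}^{-1/2}\omega(\kappa^{\scale(P)-\scale(Q)})$ which is then summed via Cauchy--Schwarz (this is Christ's argument from \cite{MR1107300}). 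You instead compute the kernel of $TT^{*}$ directly on the continuous parameter space $X\times\Z$ and run Schur's test there, with no dyadic decomposition. The two proofs are mathematically equivalent — your kernel bound $\abs{K((x,k),(x',k'))}\lesssim\Phi\,\meas{B(x,\kappa^{k\vee k'})}^{-1}\omega(\kappa^{-\abs{k-k'}})\one_{\rho(x,x')\lesssim\kappa^{k\vee k'}}$ is the continuous analogue of the paper's $\langle a_{P},a_{Q}\rangle$ estimate, and the Schur test is the continuous analogue of the Cauchy--Schwarz step applied to the $\lambda_{Q}$'s. Your version is somewhat more streamlined since it avoids the auxiliary normalization $\lambda_{Q}a_{Q}$; the paper's version makes the almost-orthogonality of the molecular pieces more visible. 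Both treat measurable selection of the near-optimal $\phi_{x,k}$ informally, which is fine here.
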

\begin{proof}
Let $(x,k)\to \phi^{(x,k)} \in C_{\omega}(x,k)$ be a linearizing function for the supremum in the definition of $A_{\omega}$.
It suffices to estimate the adjoint operator of the linearized version of $A_{\omega}$ that is given by
\[
\tilde A_{\omega}^{*} g(x) = \int_{X}\sum_{k\in\Z} g(y,k) \phi^{(y,k)}(x) \dif y.
\]
For each $k\in\Z$ and each dyadic cube $Q\in\calD_{k}$ write
\[
\int_{Q} g(y,\scale(Q)) \phi^{(y,k)}(x) \dif y = \lambda_{Q} a_{Q}(x),
\text{ where }
\lambda_{Q} = \Big( \int_{Q} \abs{g(y,\scale(Q))}^{2} \dif y \Big)^{1/2}
\]
and $a_{Q}\equiv 0$ if $\lambda_{Q}=0$.

Then the function $a_{Q}$ is supported inside $C Q$, has mean zero, and for all $x,x'\in CQ$ we have
\begin{align*}
\lambda_{Q} \abs{a_{Q}(x)-a_{Q}(x')}
&\leq
\int_{Q} \abs{g(y,\scale(Q))} \abs{\phi^{(y,\scale(Q))}(x)-\phi^{(y,\scale(Q))}(x')} \dif y\\
&\lesssim
\int_{Q} \abs{g(y,k)} \meas{Q}^{-1} \omega(\rho(x,x')/\kappa^{\scale(Q)}) \dif y\\
&\lesssim
\omega(\rho(x,x')/\kappa^{\scale(Q)}) \meas{Q}^{-1/2} \lambda_{Q}
\end{align*}
by the Cauchy--Schwarz inequality, so that
\[
\abs{a_{Q}(x)-a_{Q}(x')}
\lesssim
\meas{Q}^{-1/2} \omega(\rho(x,x')/\kappa^{\scale(Q)}).
\]
A similar argument shows $\sup \abs{a_{Q}} \lesssim \meas{Q}^{-1/2} \Phi$.
Following the argument in the proof of \cite[Theorem 1.14]{MR1107300} we will show that the collection of functions $(a_{Q})_{Q}$ is almost orthogonal.
Let $P,Q\in\calD$ be dyadic cubes with $\scale(P)\leq\scale(Q)$.
Then
\begin{align*}
\abs[\Big]{ \int_{X} a_{P}(x) \overline{a_{Q}(x)} \dif x }
&\leq
\int_{C P} \abs{a_{P}(x)} \abs{a_{Q}(x) - a_{Q}(x_{P})} \dif x\\
&\lesssim
\Phi \meas{P}^{-1/2} \int_{C P} \omega(\rho(x,x')/\kappa^{\scale(Q)}) \meas{Q}^{-1/2} \dif x\\
&\lesssim
\Phi \frac{\meas{P}^{1/2}}{\meas{Q}^{1/2}} \omega(\kappa^{\scale(P)-\scale(Q)}).
\end{align*}
Hence
\begin{align*}
\norm{\tilde A_{\omega}^{*} g}_{2}^{2}
=
\norm[\big]{ \sum_{Q\in\calD} \lambda_{Q} a_{Q} }_{2}^{2}
&\lesssim
\sum_{P,Q \in\calD : \scale(P)\leq\scale(Q)} \lambda_{P} \lambda_{Q} \abs[\big]{ \int a_{P} \overline{a_{Q}} }\\
&\lesssim
\Phi \sum_{P,Q : \scale(P)\leq\scale(Q), CP \cap CQ \neq \emptyset} \lambda_{P} \lambda_{Q} \frac{\meas{P}^{1/2}}{\meas{Q}^{1/2}} \omega( \kappa^{\scale(P)-\scale(Q)} ),
\end{align*}
and by Hölder's inequality this is bounded by
\begin{align*}
&\leq \Phi\cdot
\Big( \sum_{P} \lambda_{P}^{2} \sum_{Q : \scale(P)\leq\scale(Q), CQ \cap CP \neq \emptyset} \omega(\kappa^{\scale(P)-\scale(Q)}) \Big)^{1/2}
\Big( \sum_{Q} \lambda_{Q}^{2} \sum_{P : \scale(P)\leq\scale(Q), CQ \cap CP \neq \emptyset} \frac{\meas{P}}{\meas{Q}} \omega(\kappa^{\scale(P)-\scale(Q)}) \Big)^{1/2}\\
&\lesssim \Phi
\Big( \sum_{P} \lambda_{P}^{2} \sum_{k=0}^{\infty} \omega(\kappa^{-k}) \Big)^{1/2}
\Big( \sum_{Q} \lambda_{Q}^{2} \sum_{k=0}^{\infty} \omega(\kappa^{-k}) \Big)^{1/2}\\
&\leq \Phi \norm{\omega}_{\Dini}
\sum_{P} \lambda_{P}^{2}.
\end{align*}
Inserting the definition of $\lambda_{Q}$ we obtain the claim.
\end{proof}

\begin{proof}[Proof of \eqref{eq:L2-FS}]
By Fubini's theorem we can write
\begin{align*}
LHS\eqref{eq:L2-FS}
&\leq
\int_{X} v(x) \sum_{k\in\Z} \fint_{B(x,\beta \kappa^{k})} (A_{\omega}f(y,k))^{2}\\
&\lesssim
\int_{X} \sum_{k\in\Z} (A_{\omega}f(y,\kappa^{k}))^{2} \frac{v(B(y,\beta \kappa^{k}))}{\meas{B(y,\beta \kappa^{k})}} \dif y.
\end{align*}
Partitioning the region on which the integrand does not vanish into the sets
\[
F_{l} = \Set{(y,k)\in X\times\Z : 2^{l} < \frac{v(B(y,\beta \kappa^{k}))}{\meas{B(y,\beta \kappa^{k})}} \leq 2^{l+1}},
\quad k\in\Z,
\]
we obtain
\[
LHS\eqref{eq:L2-FS}
\lesssim
\sum_{l\in\Z} 2^{l} \int_{(y,k)\in F_{l}} (A_{\omega}f(y,k))^{2}.
\]
For every $(y,k)\in F_{l}$ the ball $B(y,\kappa^{k})$ is contained in the superlevel set of the maximal function $E_{l} := \Set{ Mv > 2^{l} }$ (this follows from the assumption $\beta\geq 1$).
The bounded support property of the test functions in the definition of $A_{\omega}$ implies that $A_{\omega}f(y,\kappa^{k}) = A_{\omega}(f\one_{E_{l}})(y,\kappa^{k})$.
Hence
\[
LHS\eqref{eq:L2-FS}
\lesssim
\sum_{l\in\Z} 2^{l} \int_{(y,k)\in F_{l}} (A_{\omega}(f\one_{E_{l}})(y,k))^{2}.
\]
Estimating the integral over $F_{l}$ by the integral over $X\times \Z$ and applying Lemma~\ref{lem:L2} we obtain
\begin{align*}
LHS\eqref{eq:L2-FS}
&\lesssim
\Phi \norm{\omega}_{\Dini} \sum_{l\in\Z} 2^{l} \norm{ f\one_{E_{l}} }_{2}^{2}\\
&=
\Phi \norm{\omega}_{\Dini} \int_{X} \abs{f}^{2} \sum_{l\in\Z} 2^{l} \one_{E_{l}}\\
&\lesssim
\Phi \norm{\omega}_{\Dini} \int_{X} \abs{f}^{2} Mv.
\qedhere  
\end{align*}
\end{proof}

\section{Weak type $(1,1)$ estimate}
\begin{proof}[Proof of \eqref{eq:isq-FS}]
Multiplying $\omega$ by a constant we may assume $\Phi \lesssim \omega(1) \lesssim \norm{\omega}_{\logDini}=1$.
Fix $\lambda>0$ and let $\calQ\subset\calD$ be a Whitney decomposition of the open set $\Omega := \Set{ Mf > \gamma^{-1/2}\lambda}$ with distance parameter $\beta$, that is, the collection of maximal dyadic cubes $Q\subset\Omega$ such that
\[
C\beta\diam(Q) \leq \dist(Q,X\setminus\Omega).
\]
The collection $\calQ$ covers $\Omega$ up to a set of measure $0$.
The corresponding Calder\'on--Zygmund decomposition of the function $f$ is given by
\[
f=g+b,
\quad
b=\sum_{Q\in\calQ} b_{Q},
\quad
b_{Q} = \one_{Q}\big(f-\mean{f}\big).
\]
For the good part $g$ we have then $\norm{g}_{\infty} \lesssim \gamma^{1/2} \lambda$ and $g \one_{Q} =\meas{Q}^{-1}\int_{Q}f$, and for the bad parts $b_{Q}$ we have $\supp b_{Q} \subset Q$, $\int b_{Q} = 0$, and $\int \abs{b_{Q}} \lesssim \gamma^{1/2} \lambda \meas{Q}$.
For $Q\in\calQ$ the ball $C\beta Q$ is still contained in $\Omega$.

By the Fefferman--Stein inequality \cite{MR0284802} we have
\begin{equation}
\label{eq:FS:exceptional}
v(\Omega) \lesssim \frac{\gamma^{1/2}}{\lambda} \int_{X} \abs{f} Mv.
\end{equation}
We estimate the terms
\begin{equation}
\label{eq:FS:good}
v(\Set{ G_{\omega,\beta}g > \lambda/2} \setminus\Omega)
\end{equation}
and
\begin{equation}
\label{eq:FS:bad}
v(\Set{ G_{\omega,\beta}b > \lambda/2} \setminus\Omega)
\end{equation}
separately.
Consider first the contribution of the good part.
By \eqref{eq:L2-FS} we obtain
\begin{align*}
\eqref{eq:FS:good}
&\lesssim
\lambda^{-2} \int_{X\setminus\Omega} (G_{\omega,\beta}g)^{2} v\\
&\lesssim
\lambda^{-2} \int_{X} \abs{g}^{2} M(\one_{X\setminus\Omega} v)\\
&\leq
\lambda^{-2} \Big(\gamma^{-1/2}\lambda \int_{X\setminus\Omega} \abs{f} M(v)
+
\sum_{Q} \mean{\abs{f}}^{2} \int_{Q\in\calQ} M(\one_{X\setminus C\beta Q}v) \Big)\\
&\lesssim
\frac{\gamma^{1/2}}{\lambda}
\Big( \int_{X\setminus\Omega} \abs{f} M(v)
+
\sum_{Q\in\calQ} \int_{Q} \abs{f} \inf_{Q} M(\one_{X\setminus C\beta Q}v) \Big)\\
&\leq
\frac{\gamma^{1/2}}{\lambda} \int_{X} \abs{f} M(v).
\end{align*}
It remains to estimate the contribution of the bad part.
Fix $Q\in\calQ$, then for $(x,k)\in X\times\Z$ with $k\geq\scale(Q)$ we have
\begin{align*}
A_{\omega}b_{Q}(x,k)
&\lesssim
\abs[\Big]{ \int_{Q} \phi^{(x,k)}(y) b_{Q}(y) \dif y }\\
&\leq
\int_{Q} \abs{\phi^{(x,k)}(y) - \phi^{(x,k)}(c_{Q})} \abs{b_{Q}(y)} \dif y\\
&\leq
\meas{B(x,\kappa^{k})}^{-1} \int_{Q} \omega(\abs{y-c_{Q}}/t) \abs{b_{Q}(y)} \dif y\\
&\lesssim
\meas{B(c_{Q},\kappa^{k})}^{-1} \omega(\kappa^{\scale(Q)-k}) \norm{b_{Q}}_{1}
\end{align*}
and $A_{\omega}b_{Q}(x,k) = 0$ if $\dist(x,Q)>\kappa^{k}$.

It follows that for every $x\not\in C\beta Q$ we have
\begin{align*}
(G_{\omega,\beta}b_{Q}(x))^{2}
&=
\sum_{k\in\Z : Q \cap B(x,(\beta+1)\kappa^{k}) \neq\emptyset} \fint_{B(x,\beta \kappa^{k})} (A_{\omega}b_{Q}(y,k))^{2} \dif y\\
&\lesssim
\sum_{k : \dist(x,Q) \leq (\beta+1)\kappa^{k}}
\meas{B(c_{Q},\beta\kappa^{k})}^{-1} \int_{X} (A_{\omega}b_{Q}(y,k))^{2} \dif y\\
&\lesssim
\sum_{k : \dist(x,Q) \leq (\beta+1)\kappa^{k}} \frac{\meas{B(c_{Q},\kappa^{k})}}{\meas{B(c_{Q},\beta\kappa^{k})}}
(\meas{B(c_{Q},\kappa^{k})}^{-1} \omega(\kappa^{\scale(Q)-k}) \norm{b_{Q}}_{1})^{2}\\
&\lesssim
\gamma \norm{b_{Q}}_{1}^{2}
\sum_{k : \dist(x,Q) \leq (\beta+1)\kappa^{k}} \frac{\omega(\kappa^{\scale(Q)-k})^{2}}{\meas{B(c_{Q},\beta\kappa^{k})}^{2}}.
\end{align*}
Taking the square root in the above inequality and integrating it in $x$ we obtain
\begin{align*}
\int_{X\setminus C\beta Q} G_{\omega,\beta}(b_{Q})(x) v(x) \dif x
&\lesssim
\gamma^{1/2} \norm{b_{Q}}_{1}
\int_{X\setminus \tilde Q} v(x) \Big( \sum_{k : \dist(x,Q) \leq (\beta+1)\kappa^{k}} \frac{\omega(\kappa^{\scale(Q)-k})^{2}}{\meas{B(c_{Q},\beta\kappa^{k})}^{2}} \Big)^{1/2} \dif x\\
&\leq
\gamma^{1/2} \norm{b_{Q}}_{1}
\int_{X\setminus C\beta Q} v(x) \sum_{k : \dist(x,Q) \leq (\beta+1)\kappa^{k}} \frac{\omega(\kappa^{\scale(Q)-k})}{\meas{B(c_{Q},\beta\kappa^{k})}} \dif x\\
&\sim
\gamma^{1/2} \norm{b_{Q}}_{1}
\sum_{l\geq \scale(Q)} \int_{\dist(x,Q) \sim \beta \kappa^{l}} v(x) \dif x \sum_{k \geq l} \frac{\omega(\kappa^{\scale(Q)-k})}{\meas{B(c_{Q},\beta\kappa^{k})}}\\
&\lesssim
\gamma^{1/2} \norm{b_{Q}}_{1} (\inf_{Q} Mv)
\sum_{l\geq \scale(Q)} \meas{B(c_{Q},\beta\kappa^{l})} \sum_{k \geq l} \frac{\omega(\kappa^{\scale(Q)-k})}{\meas{B(c_{Q},\beta\kappa^{k})}}\\
&\lesssim
\gamma^{1/2} \int_{Q} \abs{f} Mv
\sum_{k:k\geq \scale(Q)} \fbox{$\sum_{l:k\geq l\geq \scale(Q)} \frac{\meas{B(c_{Q},\beta\kappa^{l})}}{\meas{B(c_{Q},\beta\kappa^{k})}}$} \omega(\kappa^{\scale(Q)-k})\\
&\leq
\gamma^{1/2} \int_{Q} \abs{f} Mv
\sum_{k:k\geq \scale(Q)} (k-\scale(Q)+1) \omega(\kappa^{\scale(Q)-k})\\
&\sim
\gamma^{1/2} \int_{Q} \abs{f} Mv \norm{\omega}_{\logDini}.
\end{align*}
With the reverse doubling condition the sum in the box is bounded uniformly in $k,l,Q$, and we obtain an estimate in terms of the Dini instead of the log-Dini norm of $\omega$.
Summing this inequality over $Q\in\calQ$ and using subadditivity of the intrinsic square function we obtain
\[
\eqref{eq:FS:bad}
\lesssim
\lambda^{-1} \int_{X\setminus \Omega} G_{\omega,\beta}(b)(x) v(x) \dif x
\lesssim
\frac{\gamma^{1/2}}{\lambda} \int_{\Omega} \abs{f} Mv.
\]
Summing the contributions of \eqref{eq:FS:exceptional}, \eqref{eq:FS:good}, and \eqref{eq:FS:bad} we obtain the claim.
\end{proof}

\section{Sparse domination}
\label{sec:square:sparse}
\begin{proof}[Proof of Corollary~\ref{cor:sq:weight}]
Without loss of generality assume $\beta = \kappa^{\Delta k}$ with $\Delta k\in\N$.
Define a function on pairs of nested cubes by
\[
F(Q',Q) := \sup_{x'\in Q'} \sum_{k=\scale(Q')}^{\scale(Q)} \fint_{B(x',C\kappa^{k})} (A_{\omega}f(y,k-\Delta k))^{2} \dif y,
\quad
Q'\subset Q,\,Q,Q'\in\calD.
\]
Then
\[
G_{\omega,\beta}f(x)^{2}
\lesssim
\sup_{x\in Q'\subset Q} F(Q',Q)
\lesssim
G_{\omega,C\beta}f(x)^{2}.
\]
Fix $0<\eta<1$.
It suffices to show that for every $k_{0}\in\N$ there exists an $\eta$-sparse collection $\sparse = \sparse^{k_{0}}\subset\calD$ such that
\begin{equation}
\label{eq:sparse-dom}
\sup_{x\in Q'\subset Q, -k_{0}\leq \scale(Q')\leq \scale(Q)\leq k_{0}} F(Q',Q)
\lesssim
\frac{\gamma \norm{\omega}_{\logDini}^{2}}{(1-\eta)^{2}} \sum_{Q\in\sparse} \one_{Q}(x) \Big(\inf_{c\in\C} \fint_{CQ} \abs{f-c}\Big)^{2}
\end{equation}
with an implicit constant that does not depend on $k_{0}$.
Indeed, the sparse square function on the right-hand side of \eqref{eq:sparse-dom} can be estimated by a finite sum of sparse square functions associated to adjacent dyadic grids in which the oscillation is taken over $Q$ instead of a dilated cube, see \cite{MR3484688}.
The estimates from \cite{arXiv:1509.00273} and Theorem~\ref{thm:2weight-log} then apply.

We construct the collection $\sparse$ inductively starting with $\calP_{0} = \calD_{k_{0}}$.
Suppose that $\calP_{n}$ has been constructed for some $n$.
For each $P\in\calP_{n}$ let $\ch(P)$ denote the collection of maximal cubes $P'\subset P$ with $\scale(P')\geq -k_{0}$ such that
\begin{equation}
\label{eq:sparse-dom:stop}
F(P',P)^{1/2}
\geq
\frac{C \gamma^{1/2} \norm{\omega}_{\logDini}}{1-\eta} \inf_{c\in\C}\int_{CP} \abs{f-c}.
\end{equation}
It follows from Theorem~\ref{thm:L1-FS} that
\begin{equation}
\label{eq:sparse-dom:sparse}
\sum_{P'\in\ch(P)} \mu(P')
\leq
(1-\eta) \mu(P)
\end{equation}
provided that $C$ in \eqref{eq:sparse-dom:stop} is large enough (notice that $F(P',P)$ only depends on the values of $f$ on $CP$ and does not change upon adding a constant to $f$).
Let $\calP_{n+1} = \cup_{P\in\calP_{n}} \ch(P)$.
This procedure stops after finitely many steps and the required sparse collection is given by $\sparse = \cup_{n} \calP_{n}$.
In order to verify \eqref{eq:sparse-dom} let $x\in X$ and $P_{0}\supset \dotsb \supset P_{N} \ni x$ be the maximal chain of stopping cubes that contain $x$ and $P'$ the cube of scale $-k_{0}$ that contains $x$.
Then
\[
\sup_{x\in Q'\subset Q, -k_{0}\leq \scale(Q')\leq \scale(Q)\leq k_{0}} F(Q',Q)
=
P(P',P_{N})
+
\sum_{i=0}^{N-1} F(\widehat{P_{i+1}},P_{i}),
\]
where $\widehat{P}$ denotes the parent of $P$.
Notice that $P'$ is not a stopping child of $P_{N}$, since otherwise by \eqref{eq:sparse-dom:sparse} we have $P'\subsetneq P_{N}$, contradicting minimality of $P_{N}$ in $\sparse$.
The estimate \eqref{eq:sparse-dom} now follows from the fact that the stopping condition \eqref{eq:sparse-dom:stop} does not hold for pairs $(P',P_{N})$ and $(\widehat{P_{i+1}},P_{i})$.
\end{proof}

\begin{remark}
The fact that the oscillation can be used in place of the average of $f$ over $CQ$ in \eqref{eq:sparse-dom} has been observed in a different situation in \cite{arxiv:1703.00228}.
While this preserves some cancellation of the square function in the sparse operator, weighted estimates do not seem to benefit from this refinement.
\end{remark}

\appendix
\section{A logarithmically bumped two weight estimate}
\label{sec:2weight-log}
In this appendix we repeat an argument from \cite{MR3455749} in the two weight setting to complement \cite[Theorem 1.2]{arXiv:1509.00273} with a weak type $(r,r)$ estimate.
The result below does not depend neither on a metric structure nor on a doubling hypothesis.
In this section $(X,\mu)$ denotes a measure space and $\calD$ a dyadic grid, that is, a collection of measurable subsets of $X$ such that for all $Q,Q'\in\calD$ we have $0<\meas{Q}<\infty$ and $Q\cap Q' \in \Set{Q,Q',\emptyset}$.
Recall that the dyadic $A_{p}$ characteristic of a pair of weights is defined by
\begin{equation}
\label{eq:2weight-Ap}
[w,\sigma]_{A_{p}} := \sup_{Q\in\calD} \mean{w} \mean{\sigma}^{p-1}.
\end{equation}
\begin{theorem}
\label{thm:2weight-log}
Let $\sparse\subset\calD$ be an $\eta$-sparse collection for some $0<\eta<1$, and let $1<p<\infty$.
Let $w,\sigma$ be weights on $X$ and assume that the reverse H\"older inequality
\begin{equation}
\label{eq:rev-holder}
(w^{r})_{Q}^{1/r}
\leq
C (w)_{Q}
\end{equation}
holds for some $r>1$, $C<\infty$, and all $Q\in\sparse$.
Then
\[
\norm{ A_{\sparse}^{p} (\cdot\sigma) }_{L^{p}(\sigma) \to L^{p,\infty}(w)}
\lesssim_{p,\eta}
(1+\log r')^{1/p} [w,\sigma]_{A_{p}}^{1/p},
\]
where
\[
A_{\sparse}^{p} (f)
=
\Big( \sum_{Q\in\sparse} (f)_{Q}^{p} \one_{Q} \Big)^{1/p}.
\]
\end{theorem}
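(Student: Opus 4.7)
I would transpose the one-weight argument of \cite{MR3455749} to the two-weight setting. After the usual truncation we may assume $f\geq 0$ is bounded and $\sparse$ is finite, and fix $\lambda>0$; it suffices to show that $\lambda^{p}\,w\{A_{\sparse}^{p}(f\sigma)>\lambda\} \lesssim_{p,\eta} (1+\log r')\,[w,\sigma]_{A_{p}}\,\norm{f}_{L^{p}(\sigma)}^{p}$. First I would carry out a Calder\'on--Zygmund-type stopping-time construction relative to the $\sigma$-averages of $f$: starting from the maximal cubes of $\sparse$ and iterating, build $\mathcal{F}\subset\sparse$ whose $\mathcal{F}$-children of a cube $F$ are the maximal $Q\subsetneq F$ in $\sparse$ satisfying $\sigma(Q)^{-1}\int_{Q}f\sigma > 2\,\sigma(F)^{-1}\int_{F}f\sigma$. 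The standard martingale argument shows that $\mathcal{F}$ is $\frac{1}{2}$-sparse with respect to $\sigma$, giving pairwise disjoint $E_{F}\subset F$ with $\sigma(E_{F})\geq\sigma(F)/2$, and every $Q\in\sparse$ has a unique smallest ancestor $\pi(Q)\in\mathcal{F}$. Using the identity $(f\sigma)_{Q}=\mean{\sigma}\cdot\sigma(Q)^{-1}\int_{Q}f\sigma$ together with the stopping condition yields the pointwise domination
\[
A_{\sparse}^{p}(f\sigma)^{p} \leq 2^{p}\sum_{F\in\mathcal{F}}\Big(\frac{1}{\sigma(F)}\int_{F}f\sigma\Big)^{\!p} S_{F},\qquad S_{F}:=\sum_{\substack{Q\in\sparse\\ \pi(Q)=F}}\mean{\sigma}^{p}\,\one_{Q}.
\]

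The crux is then a local weak-type bound for each $S_{F}$ capturing the logarithmic correction from reverse H\"older. The $A_{p}$ hypothesis gives the cube-by-cube estimate $\mean{\sigma}^{p}w(Q)\leq[w,\sigma]_{A_{p}}\sigma(Q)$, but summing directly in $Q$ fails because $\sparse$ is only $\eta$-sparse with respect to $\mu$, not $\sigma$. The reverse H\"older inequality \eqref{eq:rev-holder} resolves this: I would organize the cubes $Q$ with $\pi(Q)=F$ into dyadic strips based on the ratio $\mean{\sigma}/\mean[F]{\sigma}$, and on each strip apply H\"older's inequality at exponent $r$ on the $w$-side, invoking \eqref{eq:rev-holder} to convert the $w$-integral into a $\mu$-integral where the $\mu$-sparsity of $\sparse$ is directly available. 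Only $O(\log r')$ strips should contribute nontrivially, producing a distributional bound of the shape
\[
w\{x\in F:S_{F}(x)>\tau\} \lesssim (1+\log r')\,[w,\sigma]_{A_{p}}\,\tau^{-1}\sigma(F).
\]

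The assembly then applies this local bound to the pointwise domination and sums over $F\in\mathcal{F}$ via Kolmogorov-type reasoning in $L^{p,\infty}(w)$. On each $E_{F}$ one has $\sigma(F)^{-1}\int_{F}f\sigma\leq M_{\calD}^{\sigma}f(x)$, where $M_{\calD}^{\sigma}$ is the dyadic maximal operator weighted by $\sigma$; combined with the $\sigma$-sparsity of $\mathcal{F}$ this reduces the problem to the $L^{p}(\sigma)\to L^{p}(\sigma)$ boundedness of $M_{\calD}^{\sigma}$, which holds with norm $\lesssim p'$ by Doob's inequality, producing the desired $\norm{f}_{L^{p}(\sigma)}^{p}$ on the right. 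The hardest step will be the local distributional bound above: the reverse H\"older hypothesis is asymmetric --- imposed only on $w$ --- so $\mu$-sparsity of $\sparse$ cannot be transferred to $\sigma$-sparsity, and the dyadic-strip decomposition with H\"older at exponent $r$ on each strip is the device that extracts the $(1+\log r')^{1/p}$ bump, the number of nontrivially-contributing strips $\sim\log r'$ being precisely the source of the logarithm.
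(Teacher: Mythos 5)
Your proposal takes a genuinely different route from the paper. The paper partitions $\sparse$ directly into slices $\sparse_{m}=\{Q : 2^{-m-1}<(f\sigma)_{Q}\leq 2^{-m}\}$ and treats three ranges of $m$ separately: $m<0$ by Muckenhoupt's weak maximal estimate; $0\leq m\leq m_{0}\sim\log r'$ by the single-slice $L^{p}(w)$ estimate $\norm{A_{\sparse_{m}}^{p}(f\sigma)}_{L^{p}(w)}\lesssim[w,\sigma]_{A_{p}}^{1/p}\norm{f}_{L^{p}(\sigma)}$ (the triangle inequality in $L^{p}$, not $L^{p,\infty}$, is what makes the $m_{0}$ slices cost exactly a factor $m_{0}$); and $m>m_{0}$ by the observation that $\mu$-sparsity makes the sets of high multiplicity exponentially $\mu$-small, which reverse H\"older converts to $w$-small, and these are then controlled by the weak maximal bound. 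You replace this with a stopping-time decomposition $\mathcal{F}$ by $\sigma$-averages, a pointwise domination by $\sum_{F}a_{F}S_{F}$, a local weak-type bound for each $S_{F}$, and a Kolmogorov-type assembly.

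The assembly is where I see a concrete gap. Your proposed local bound $w\{x\in F:S_{F}(x)>\tau\}\lesssim K\tau^{-1}\sigma(F)$ with $K=(1+\log r')[w,\sigma]_{A_{p}}$ is a weak $L^{1}(w)$ bound, and you need to control $\norm{\sum_{F}a_{F}S_{F}}_{L^{1,\infty}(w)}$. But $L^{1,\infty}$ is not a normed space and the $S_{F}$ have badly overlapping supports: any point lies inside an entire chain $F_{0}\supsetneq F_{1}\supsetneq\dotsb$ of stopping cubes. The $\sigma$-sparsity of $\mathcal{F}$ gives disjoint sets $E_{F}\subset F$, but $S_{F}$ is spread over all of $F$, so the disjointness is not available where you need it. Summing weak $(1,1)$ norms across a Carleson family does not give a weak $(1,1)$ norm for the sum; Stein--Weiss-type lemmas on sums in $L^{1,\infty}$ cost an extra logarithm that you cannot afford. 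What would close the argument cleanly is a strong $L^{1}(w)$ bound $\norm{S_{F}}_{L^{1}(w)}\lesssim K\sigma(F)$, but the only cube-by-cube input from $A_{p}$ is $\mean{\sigma}^{p}w(Q)\leq[w,\sigma]_{A_{p}}\sigma(Q)$, and $\sum_{\pi(Q)=F}\sigma(Q)$ is not controlled by $\sigma(F)$ because $\sparse$ is only $\mu$-sparse, which is exactly the difficulty you yourself flagged. The local distributional bound via ``dyadic strips in $\mean{\sigma}/\mean[F]{\sigma}$'' is also left entirely unverified, and I do not see that only $O(\log r')$ strips contribute: on a strip $(\sigma)_{Q}\sim 2^{j}(\sigma)_{F}$ the Carleson estimate produces $\sum_{Q}\sigma(Q)\lesssim 2^{j}\sigma(F)$, which is not summable in $j>0$ without extra input.

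In short: the pointwise domination and the $\sigma$-sparsity of $\mathcal{F}$ in your plan are correct, but the two steps you identify as the ``crux'' and the ``assembly'' are both underspecified, and the assembly in particular seems to face an obstruction that the paper's direct slicing (which places the bulk of the sum in $L^{p}(w)$ rather than $L^{1,\infty}(w)$) is specifically designed to sidestep. I would recommend dropping the stopping-time structure here and slicing $\sparse$ by the value of $(f\sigma)_{Q}$, as the paper does.
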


\begin{remark}
The sharp reverse Hölder inequality on the dyadic grid in $\R^{d}$, proved in \cite[Theorem 2.3]{MR2990061}, tells that \eqref{eq:rev-holder} holds in this case with $C=2$ and $r'=2^{d+1} [w]_{A_{\infty}}$.
\end{remark}
As already observed in \cite{MR3455749}, the argument below, with the Fefferman--Stein inequality \cite{MR0284802} in place of Muckenhoupt's two weight weak type $(p,p)$ inequality, also gives a short proof of the bound
\[
\norm{ A_{\sparse}^{1} f }_{L^{1,\infty}(w)} \lesssim_{\eta} (1+\log r') \int \abs{f} Mw.
\]
Stronger estimates for $A_{\sparse}^{1}$ have been obtained in \cite{MR3327006} and \cite{MR3455749}.

\begin{proof}[Proof of Theorem~\ref{thm:2weight-log}]
Partitioning $\sparse$ into $O(\frac1\eta)$ families we may assume that $\sparse$ is $\frac78$-sparse, see \cite[\textsection 6]{arXiv:1508.05639} (in fact the rather sophisticated argument given there is not strictly necessary because we can assume without loss of generality that $\sparse$ is finite).
By homogeneity it suffices to show
\begin{equation}
\label{eq:2weight-log:claim}
w\{ (A_{\sparse}^{p}(f\sigma))^{p} > 2 \}
\lesssim
(1+\log r') [w,\sigma]_{A_{p}} \norm{f}_{L^{p}(\sigma)}^{p}.
\end{equation}
Let
\[
\sparse_{m} := \{ Q\in \sparse : 2^{-m-1} < \mean{f\sigma} \leq 2^{-m}\},
\quad
m\in\Z,
\]
then the left-hand side of \eqref{eq:2weight-log:claim} is bounded by
\begin{equation}
\label{eq:2weight-log:claim2}
w\{ M(f\sigma)>1\}
+
w\{ \sum_{m=0}^{m_{0}} (A_{\sparse_{m}}^{p}(f\sigma))^{p} > 1 \}
+
w\{ \sum_{m=m_{0}}^{\infty} (A_{\sparse_{m}}^{p}(f\sigma))^{p} > 1 \},
\end{equation}
where $Mf(x)=\sup_{x\in Q\in\calD} (f)_{Q}$ denotes the dyadic maximal function and $m_{0}\in\N$ will be chosen later.

We recall from \cite[\textsection 7]{MR0293384} the two weight weak type estimate for $M$ and its proof: let $\lambda>0$ and $\calQ\subset\calD$ a collection of disjoint dyadic cubes such that $(f\sigma)_{Q} > \lambda$ for $Q\in\calQ$.
Then
\begin{align*}
\lambda^{p} \sum_{Q\in\calQ} w(Q)
&\leq
\sum_{Q\in\calQ} w(Q) (f\sigma)_{Q}^{p}\\
&\leq
\sum_{Q\in\calQ} \mu(Q) (w)_{Q} (f^{p}\sigma)_{Q} (\sigma)_{Q}^{p/p'}\\
&\leq
[w,\sigma]_{A_{p}} \sum_{Q\in\calQ} \int_{Q} f^{p} \sigma\\
&\leq
[w,\sigma]_{A_{p}} \norm{f}_{L^{p}(\sigma)}^{p},
\end{align*}
so that
\begin{equation}
\label{eq:Muck-weak-max}
w\Set{Mf>\lambda} \leq \lambda^{-p} [w,\sigma]_{A_{p}} \norm{f}_{L^{p}(\sigma)}^{p}.
\end{equation}
This gives us the required estimate for the first term in \eqref{eq:2weight-log:claim2}.

Next we claim that for every $m$
\begin{equation}
\label{eq:single-slice}
\norm{A_{\sparse_{m}}^{p}(f\sigma)}_{L^{p}(w)} \lesssim [w,\sigma]_{A_{p}}^{1/p} \norm{f}_{L^{p}(\sigma)}
\end{equation}
holds.
To see this, let
\[
E_{m}(Q) := Q \setminus \cup_{Q'\subsetneq Q, Q'\in \sparse_{m}} Q',
\quad
Q\in \sparse_{m}.
\]
Then it follows from $\frac78$-sparsity that
\[
\mean{f\sigma} \sim \mean{f\sigma 1_{E_{m}(Q)}}.
\]
Using this fact, Hölder's inequality, and disjointness of the sets $E_{m}(Q)$ we obtain
\begin{align*}
\sum_{Q\in \sparse_{m}} \mean{f\sigma}^{p} w(Q)
&\sim
\sum_{Q\in \sparse_{m}} \mean{f\sigma 1_{E_{m}(Q)}}^{p} w(Q)\\
&\leq
\sum_{Q\in \sparse_{m}} \mean{f^{p}\sigma 1_{E_{m}(Q)}} \mean{\sigma}^{p/p'} w(Q)\\
&\leq
[w,\sigma]_{A_{p}} \sum_{Q\in \sparse_{m}} \int f^{p}\sigma 1_{E_{m}(Q)}\\
&\leq
[w,\sigma]_{A_{p}} \int f^{p}\sigma,
\end{align*}
finishing the proof of \eqref{eq:single-slice}.
The second term in \eqref{eq:2weight-log:claim2} can now be estimated by
\[
\norm{ \sum_{m=0}^{m_{0}} (A_{\sparse_{m}}^{p}(f\sigma))^{p} }_{L^{1}(w)}
\leq
\sum_{m=0}^{m_{0}} \norm{ (A_{\sparse_{m}}^{p}(f\sigma))^{p} }_{L^{1}(w)}
=
\sum_{m=0}^{m_{0}} \norm{ A_{\sparse_{m}}^{p}(f\sigma) }_{L^{p}(w)}^{p},
\]
and in view of \eqref{eq:single-slice} this is acceptable provided $m_{0} \lesssim \log r'$.

It remains to control the third term in \eqref{eq:2weight-log:claim2} under the hypothesis $m_{0} \gtrsim_{p} \log r'$.
To this end let $\sparse_{m}^{*}$ consist of the maximal cubes in $\sparse_{m}$.
By definition of $\sparse_{m}$ the third term in \eqref{eq:2weight-log:claim2} is controlled by
\begin{align*}
w\{\sum_{m=m_{0}}^{\infty} 2^{-pm} \sum_{Q\in \sparse_{m}} 1_{Q} >  c \sum_{m=m_{0}}^{\infty} m^{-2}\}
&\leq
\sum_{m=m_{0}}^{\infty} w\{ \sum_{Q\in \sparse_{m}} 1_{Q} >  c 2^{pm} m^{-2}\}\\
&\leq
\sum_{m=m_{0}}^{\infty} \sum_{Q \in \sparse_{m}^{*}} w(\beta_{Q}),
\end{align*}
where $\beta_{Q} = Q \cap \{ \sum_{Q'\in \sparse_{m}} 1_{Q'} >  c 2^{pm} m^{-2}\}$.
By sparsity we can estimate the reference measure of $\beta_{Q}$ by
\[
\meas{\beta_{Q}} \lesssim \exp(-c 2^{cm}) \abs{Q}
\]
By Hölder's inequality and \eqref{eq:rev-holder} we obtain
\[
\mean{w1_{\beta(Q)}}
\leq
\mean{1_{\beta(Q)}}^{1/r'} \mean{w^{r}}^{1/r}
\lesssim
\exp(-c 2^{cm}/r') \mean{w}.
\]
Multiplying this by $\abs{Q}$ and summing over $Q\in \sparse_{m}^{*}$ and $m$ we obtain the estimate
\begin{align*}
\MoveEqLeft
\sum_{m=m_{0}}^{\infty}  \exp(-c 2^{cm}/r') \sum_{Q\in \sparse_{m}^{*}} w(Q)\\
&\leq
\sum_{m=m_{0}}^{\infty} \exp(-c 2^{cm}/r') w\{ M(f\sigma) > 2^{-m-1}\}\\
&\lesssim
[w,\sigma]_{A_{p}} \norm{f}_{L^{p}(\sigma)}^{p} \sum_{m=m_{0}}^{\infty} 2^{pm} \exp(-c 2^{cm}/r'),
\end{align*}
where we have used \eqref{eq:Muck-weak-max} in the last line.
The hypothesis on $m_{0}$ ensures that the latter sum is $\lesssim 1$.
\end{proof}

\section{Optimality of the dependence on the aperture}
\label{sec:beta-power}
Let $\omega(t)=t$ and $X=\R^{d}$.
In this case $\gamma = \beta^{d}$.
In this appendix we show that in this case the exponent of $\gamma$ in \eqref{eq:cor:Ap} cannot be improved using an observation going back at least to \cite[\textsection 2]{MR0257819}.

Let $f$ be a function on $\R^{d}$ and $\phi\in C_{\omega}(0,0)$ be such that $\abs{\int f \phi}=c\neq 0$.
Then
\[
A_{\omega}f(y,k) \geq (\kappa^{k})^{-d-1} c
\quad\text{provided } \kappa^{k}>\abs{y}+1,
\]
since then $B(0,1)\subset B(y,\kappa^{k})$, and the convolution with a suitable multiple of $\phi$ occurs in the supremum defining $A_{\omega}$.
In particular, for $\beta = \kappa^{k_{0}}$, $k_{0}\gg 0$, and $\abs{x} \sim \kappa^{k}$, $k\geq k_{0}$, we obtain
\begin{multline*}
(G_{\omega,C\beta}f(x))^{2}
\geq
\kappa^{-kd} \int_{B(0,\kappa^{k})} (A_{\omega}f(y,k-k_{0}))^{2} \dif y\\
\geq
\kappa^{-kd} \int_{B(0,\kappa^{k-k_{0}}-1)} (A_{\omega}f(y,k-k_{0}))^{2} \dif y
\gtrsim
\kappa^{-k_{0}d} \kappa^{(k-k_{0})(-d-1)}
\sim
\beta^{-d} (\abs{x}/\beta)^{-2(d+1)}.
\end{multline*}
Let $w(x)=\abs{x}^{\alpha}$, $-d<\alpha<(p-1)d$, so that $w$ is an $A_{p}$ weight.
Then we obtain
\[
\norm{ G_{\omega,\beta}f }_{L^{p,\infty}(w)}^{p}
\gtrsim
\beta^{(-d/2)p}
w\{ \abs{x}\sim\beta \}
\gtrsim
\beta^{d+\alpha-dp/2}.
\]
Taking $\alpha$ close to $(p-1)d$ we see that the power of $\gamma$ in \eqref{eq:cor:strong}, \eqref{eq:cor:weak}, and \eqref{eq:cor:Ap} cannot be decreased.
\printbibliography
\end{document}
